\newtheorem{thm}{Theorem}[section]
\newtheorem{lma}{Lemma}[section]
\newcommand{\norm}[1]{\left| \! \left| #1\right| \!\right|}
\newcommand{\bnorm}[1]{\big| \! \big| #1\big| \!\big|}
\newcommand{\bm}[1]{ \begin{bmatrix} #1 \end{bmatrix}}
\newcommand{\Xs}{\mathcal{X}}
\newcommand{\Ys}{\mathcal{Y}}
\newcommand{\ip}[1]{\left< #1 \right>}
\newcommand{\dom}{\mathscr{D}}
\newcommand{\cu}{\textup{cos}}
\newcommand{\su}{\textup{sin}}
\numberwithin{equation}{section}
\font\eka=cmex10
\def\ind{\mathrel{\hbox{\rlap{%
\hbox to 7.5pt{\hrulefill}}\raise6.6pt\hbox{\eka\char'167}}}}
\begin{document}
\title[Output error minimizing BFN method]{Output error minimizing back and forth nudging method for initial state recovery*\footnote{*T\lowercase{his is the preprint version of the article published in} S\lowercase{ystems} \& C\lowercase{ontrol} L\lowercase{etters, 94, p. 111--117 (2016).} T\lowercase{he article doi is 10.1016/j.sysconle.2016.06.002}}}


\author[Atte Aalto]{Atte Aalto \\ \\ I\lowercase{nria}, U\lowercase{niversit\'e} P\lowercase{aris}--S\lowercase{aclay}, P\lowercase{alaiseau}, F\lowercase{rance}; M$\Xi$DISIM \lowercase{team}}




\thanks{Email: atte.ej.aalto@gmail.com} 

\begin{abstract}
We show that for linear dynamical systems with skew-adjoint generators, the initial state estimate given by the back and forth nudging method with colocated feedback, converges to the minimizer of the discrepancy between the measured and simulated outputs --- given that the observer gains are chosen suitably and the system is exactly observable. If the system's generator $A$ is essentially skew-adjoint and dissipative (with not too much dissipation), the colocated feedback has to be corrected by the operator $e^{At}e^{A^*t}$ in order to obtain such convergence. In some special cases, a feasible approximation for this operator can be found analytically. The case with wave equation with constant dissipation will be demonstrated.

\medskip

\noindent
{\it Keywords: Back and forth nudging, State estimation, Output error minimization, Observers, Variational data assimilation}

\end{abstract}

\maketitle

\section{Introduction}

This paper deals with the problem of retrieving the initial state of a --- possibly infinite-dimensional --- linear dynamical system from the noisy output measurements of the system over a given, finite time interval $[0,T]$. A more or less classical approach is to minimize the quadratic discrepancy between the measured and modeled outputs over all possible initial states. This approach is often called \emph{variational data assimilation} --- for details and references, see \cite{LeDimet} by Le Dimet \emph{et al.} and \cite{Teng07} by Teng \emph{et al}. In the case of a linear system, this approach leads to a linear-quadratic optimization problem, whose solution amounts to computing and inverting the observability Gramian. 
This approach is seemingly simple, but when the system's dimension is high, the optimization task may be numerically challenging, so alternative methods are called for.

One alternative is  the \emph{back and forth nudging} (BFN) method, introduced by Auroux and Blum in \cite{AB05} and \cite{AB08}. The method is based on using a Luenberger observer alternately forward and backward in time over and over again. In these papers the theory is developed for finite-dimensional systems and it is assumed that the full state is observed. The generalization to infinite-dimensional systems and more general observation operators is presented by Ramdani \emph{et al} in \cite{Ramdani10}. There it is shown that in the absence of any noise terms, the BFN method converges exponentially to the true initial state. They assume that the system is exponentially stabilizable both to forward and backward directions. 
The BFN method is presented and reviewed in Section~\ref{sec:BFN}.

Whereas the variational method gives equal weight to all measurements on the time interval $[0,T]$, the BFN method emphasizes the measurements, and hence also measurement noise, closer to the initial time, in particular if the observer gain is high. The sensitivity to noise is expected to reduce when the gain is reduced. In Section~\ref{sec:skew}, we show that for systems with skew-adjoint generators, the initial state estimate given by the BFN method  with colocated feedback, converges to the minimizer of the discrepancy between the measured and simulated outputs --- given that the observer gains are taken to zero with a suitable rate. Systems with essentially skew-adjoint and dissipative (ESAD) generators, that is, $\dom(A^*)=\dom(A)$ and $A+A^*=-Q$ for some bounded and small enough $Q \ge 0$, are treated in Section~\ref{sec:ESAD}. Then
  the colocated feedback has to be corrected by the operator $e^{At}e^{A^*t}$ in order to obtain such convergence (without this the BFN method converges to a biased estimate). In some special cases, this operator, or a feasible approximation for it, can be found analytically. In section~\ref{sec:wave}, we demonstrate that for the wave equation with constant dissipation, $u_{tt}=\Delta u -\epsilon u_t$ with Dirichlet boundary conditions, it holds that $e^{At}e^{A^*t} \approx e^{-\epsilon t}I$ resulting in a simple discounting factor for the observer gain.  We shall also give upper bounds for the error due to the approximation $k(t)I \approx e^{At}e^{A^*t}$ in the observer gain. These bounds are given in the presented wave equation context, but the results hold more generally.

In the paper we use notation $\mathcal{L}(\mathcal{H}_1,\mathcal{H}_2)$ for the space of bounded linear operators from a Hilbert space $\mathcal{H}_1$ to another Hilbert space $\mathcal{H}_2$. We also denote $\mathcal{L}(\mathcal{H})=\mathcal{L}(\mathcal{H},\mathcal{H})$. When there is no possibility of confusion, the notation $\norm{\cdot}$ is used without indication in which space the norm is computed.

\section{Problem setup and the back and forth nudging method} \label{sec:BFN}

Consider the problem of retrieving the initial state of the system
\[
\begin{cases}
\dot z=Az+f+\eta, \\
z(0)=z_0, \\
y=Cz+\nu
\end{cases}
\]
from measurements $y(t)$ for $t \in [0,T]$. Here $A:\Xs \to \Xs$ is the generator of a dissipative semigroup $e^{At}$ on the state space $\Xs$ with domain $\dom(A)$. The output operator $C:\Xs \to \Ys$ is assumed to be bounded and both $\Xs$ and $\Ys$ are assumed to be separable Hilbert spaces. The load term $f$ is assumed to be known and $\eta$ and $\nu$ are unknown input and output noise terms, respectively. Of the load and noise terms we only assume that they are smooth enough so that $y \in L^2(0,T;\Ys)$. 

The back and forth nudging method is defined as follows. The dynamics of the \emph{forward observer} for $j = 1,2,...$ are governed by
\begin{equation} \label{eq:fwd}
\begin{cases}
\dot z_j^+(t)=Az_j^+(t)+f(t)+\kappa_jC^*(y(t)-Cz_j^+(t)), \\
z_j^+(0)=z_{j-1}^-(T), \quad \textrm{for } j \ge 2.
\end{cases}
\end{equation}
For $j=1$, the initial state can be any vector in $\Xs$, since its contribution will vanish. The \emph{backward observer} is also defined "forward in time"
\begin{equation} \label{eq:bwd}
\begin{cases}
\dot z_j^-(t)=-Az_j^-(t)-f(T-t)+\kappa_jC^*(y(T-t)-Cz_j^-(t)), \\
z_j^-(0)=z_j^+(T),
\end{cases}
\end{equation}
that is, $z_j^-(t)$ is an estimate of $z(T-t)$ and the initial state estimate that we are interested in is given by $z_j^-(T)$.
The feedback of the form $C^*(y-Cz)$ in the observers is called \emph{colocated feedback}, roughly meaning that the measurement through $C$ and the control action through $C^*$ take place in the same physical location in the computational domain. Classical references on the colocated feedback are \cite{Liu} by Liu for skew-adjoint operators and \cite{Curtain_Weiss} by Curtain and Weiss studying also ESAD operators. For a study on the colocated feedback for the wave equation, see \cite{Chapelle_wave} by Chapelle \emph{et al}.

We show for systems with skew-adjoint generators, that if the observer gains $\kappa_j$ in the back and forth nudging iterations \eqref{eq:fwd} and \eqref{eq:bwd} are selected in a certain way, then the initial state estimate will converge to the minimizer of the cost function
\begin{equation} \label{eq:cost_init}
J(x):=\frac12 \int_0^T \norm{y(s)-Cz[x](s)}^2ds
\end{equation}
where $z[x]$ is the solution of
\begin{equation} \label{eq:zinit}
\begin{cases}
\dot z[x]=Az[x]+f, \\
z[x](0)=x.
\end{cases}
\end{equation}
Complementary results are obtained for systems with ESAD generators and for the classical BFN method with constant feedback $\kappa_j=\kappa$. 

In the first results on the BFN method, \cite{AB05} and \cite{AB08}, the feedback term is simply a matrix $K$ that can be chosen freely. Obviously it can be chosen so that both $A-K$ and $-A-K$ have strictly negative eigenvalues. Then if there are no noises, the BFN algorithm converges exponentially to the true initial state. The article \cite{Ramdani10} lays the foundation for the algorithm for infinite-dimensional systems. There the feedback in the observers is of the form $\pm A - K_{\pm}C$ where $C$ is a given (possibly unbounded) observation operator and the feedback operator $K_{\pm}$ can be chosen freely. The main result itself is similar as that of  \cite{AB05} and \cite{AB08}, namely exponential convergence to the true initial state if $K_{\pm}$ can be chosen so that $\pm A - K_{\pm}C$ generate exponentially stable semigroups, and if the output is not corrupted by noise. Numerical aspects of the method are considered by Haine and Ramdani in \cite{HR12}. The BFN method for systems with skew-adjoint operators with colocated feedback is studied by Ito \emph{et al.} in \cite{Ito11}  and by Phung and Zhang in \cite{Focus}. In the latter article the method is called \emph{time reversal focusing} and they treat the concrete problem of retrieving the initial state of the Kirchhoff plate equation from partial field measurements. Further development of the BFN method includes \cite{Haine} by Haine showing a partial convergence result when the exact observability assumption is not satisfied, and \cite{Fridman} by Fridman extending the result to a class of semilinear systems. Application to unbounded computational domain is considered by Fliss \emph{et al.} in \cite{Seb}, and  a variant for systems containing a diffusive term is suggested by Auroux \emph{et al.} in \cite{ABN11} where the idea is to change the sign of the diffusive term in the backward phase. The effect of input and output noise on the method has been briefly discussed by Shim \emph{et al.} in \cite{Shim} and by Donovan \emph{et al.} in \cite{Donovan}. The BFN method or a related time-reversal approach can also be used for source identification problems, as in \cite{Ammari13} by Ammari \emph{et al.}

\section{Results}

We shall start by showing an important lemma. In the most general cases treated in this paper, we have feedbacks of the form $A-\kappa K(r)C^*C$ for the forward observer and $-A-\kappa K(T-r)C^*C$ for the backward observer where $K(\cdot) \in C(0,T;\mathcal{L}(\Xs))$. We remark that when $A$ is ESAD, then also $-A$ generates a strongly continuous semigroup since it can be viewed as a bounded perturbation of a skew-adjoint operator $A_0^*=-A-Q/2$ (see \cite[Sections~1.10 and~3.1]{Pazy}). For any $x \in \Xs$, it holds that
\[
\frac{d}{dt}\norm{e^{-At}x}^2=\ip{Qe^{-At}x,e^{-At}x}\le\norm{Q}\norm{e^{-At}x}^2
\]
and so by Gr\"onwall's inequality, $\norm{e^{-At}} \le e^{\norm{Q/2}t}$.

Since also $K(r)C^*C$ is bounded, the operators $A-\kappa K(r)C^*C$  and $-A-\kappa K(T-r)C^*C$ generate strongly continuous time evolution operators $U^+(t,s)$ and $U^-(t,s)$, respectively (see \cite[Section~5.2]{Pazy}). Define also $U^{\pm}(t)=U^{\pm}(t,0)$.
As will be seen later in the proofs of our main results, after every forward and backward iteration, the old error term is multiplied by $U^-(T)U^+(T)$. We now show that if the dissipative term $Q$ is small enough, and if $K(t) \approx k(t)I$ for some strictly positive function $k(\cdot)$, then this operator is strictly contractive.
\begin{lma} \label{lma:stable}
Assume that the system is exactly observable at time $T$, that is, $\int_0^T \norm{Ce^{At}x}^2dt \ge \delta \norm{x}^2$ for all $x \in \Xs$ and some $\delta >0$.
Assume also $\dom(A^*)=\dom(A)$ and $A+A^*=-Q$ with $Q \ge 0$,  and that there exists a function $k \in C(0,T)$ with $k_1 \ge k(t) \ge k_0>0$, so that $Q$, $K(t)$, and $k(t)$ satisfy
\begin{align*}
\alpha:=&2k_0\delta -2\norm{C}^2 \! \Bigg( \! 2k_1 \!\! \left(\frac{e^{\norm{Q/2}T} \! -1}{\norm{Q/2}}-T \!\right) \! + \! \int_0^T \! e^{\norm{Q/2}s}\norm{K(s)-k(s)I}ds\Bigg) \! >0.
\end{align*}
Then 
$ \displaystyle
\ \norm{U^-(T)U^+(T)}_{\mathcal{L}(\Xs)} \le 1-\alpha \kappa +\mathcal{O}(\kappa^2).
$
\end{lma}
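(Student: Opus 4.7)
The plan is to perform a first-order Taylor expansion in $\kappa$ of the product $U^-(T)U^+(T)$ and then bound the symmetric part of the $\mathcal{O}(\kappa)$ correction from below using the exact observability hypothesis. Because the excerpt has already shown that $-A$ generates a strongly continuous semigroup, $A$ in fact generates a $C_0$-group, so the zeroth-order term collapses via $e^{-AT}e^{AT}=I$. Uniform $\mathcal{O}(\kappa^2)$ control of the Dyson remainder is then immediate from $\norm{e^{As}}\le 1$, $\norm{e^{-As}}\le e^{\norm{Q/2}s}$, and $\norm{K(s)}\le k_1$.

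Applying Duhamel's identity to each observer and keeping the first two terms gives
\[
U^+(T) = e^{AT} - \kappa\int_0^T e^{A(T-s)}K(s)C^*Ce^{As}\,ds + \mathcal{O}(\kappa^2),
\]
and an analogous expansion for $U^-(T)$. Composing, applying $e^{-AT}e^{A(T-s)}=e^{-As}$ on the left factor and the substitution $r=T-s$ on the right, both contributions at first order coincide, producing
\[
U^-(T)U^+(T) = I - 2\kappa\Lambda + \mathcal{O}(\kappa^2), \quad \Lambda := \int_0^T e^{-As}K(s)C^*Ce^{As}\,ds.
\]
Hence $\norm{U^-(T)U^+(T)x}^2 = \norm{x}^2 - 4\kappa\,\textup{Re}\ip{\Lambda x,x} + \mathcal{O}(\kappa^2)$, and it remains to prove the coercivity $\textup{Re}\ip{\Lambda x,x}\ge(\alpha/2)\norm{x}^2$.

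For the coercivity estimate I would split $K(s) = k(s)I + (K(s)-k(s)I)$. The deviation piece contributes at most $\norm{C}^2\norm{x}^2\int_0^T e^{\norm{Q/2}s}\norm{K(s)-k(s)I}\,ds$, using $\norm{e^{-As}}\le e^{\norm{Q/2}s}$. For the scalar piece, I would identify the adjoint of the backward semigroup: since $(-A)^*=A+Q$, one has $(e^{-As})^*=e^{(A+Q)s}$, so
\[
\textup{Re}\ip{k(s)e^{-As}C^*Ce^{As}x,x} = k(s)\,\textup{Re}\ip{Ce^{As}x, Ce^{(A+Q)s}x}.
\]
Writing $e^{(A+Q)s}=e^{As}+[e^{(A+Q)s}-e^{As}]$, the diagonal contribution integrates to at least $k_0\int_0^T\norm{Ce^{As}x}^2\,ds\ge k_0\delta\norm{x}^2$ by exact observability. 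For the cross term, Duhamel gives $e^{(A+Q)s}-e^{As} = \int_0^s e^{A(s-r)}Qe^{(A+Q)r}\,dr$, and a Gr\"onwall argument analogous to the one in the excerpt yields $\norm{e^{(A+Q)t}}\le e^{\norm{Q/2}t}$, whence $\norm{e^{(A+Q)s}-e^{As}}\le 2(e^{\norm{Q/2}s}-1)$; integrating against $k_1\norm{C}^2$ produces exactly the $2k_1\bigl(\tfrac{e^{\norm{Q/2}T}-1}{\norm{Q/2}}-T\bigr)$ term of $\alpha$. Combining the three estimates gives $\textup{Re}\ip{\Lambda x,x}\ge(\alpha/2)\norm{x}^2$, hence $\norm{U^-(T)U^+(T)x}^2\le(1-2\alpha\kappa+\mathcal{O}(\kappa^2))\norm{x}^2$, and the statement follows upon taking square roots.

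The main obstacle is the ESAD bookkeeping in the coercivity step: the adjoint of the backward semigroup is $e^{(A+Q)s}$ rather than $e^{As}$, and it is the Duhamel defect between these two semigroups that produces the explicit $\bigl(e^{\norm{Q/2}T}-1\bigr)/\norm{Q/2}-T$ correction in $\alpha$. In the purely skew-adjoint case $Q=0$ the two semigroups coincide, the defect vanishes, and the coercivity reduces directly to observability; the content of the lemma is that a sufficiently small dissipation together with a controlled departure of $K(s)$ from a scalar multiple of the identity can still be absorbed into a strictly contractive bound.
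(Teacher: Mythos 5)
Your proposal is correct and follows essentially the same route as the paper: the same Duhamel expansion yielding $U^-(T)U^+(T)=I-2\kappa\int_0^T e^{-As}K(s)C^*Ce^{As}\,ds+\mathcal{O}(\kappa^2)$, the same splitting $K=kI+(K-kI)$, the same Duhamel/Gr\"onwall bound $2(e^{\norm{Q/2}s}-1)$ for the dissipative defect, and the same use of exact observability for the dominant Gramian term. The only (cosmetic) difference is that you estimate the quadratic form $\textup{Re}\ip{\Lambda x,x}$ and take square roots, while the paper rewrites $e^{-As}=e^{A^*s}+{}$correction and bounds the operator norm of the self-adjoint part $I-2k_0\kappa\int_0^T e^{A^*s}C^*Ce^{As}\,ds$ directly; these are adjoint versions of the same computation and yield the identical constant $\alpha$.
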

\noindent In the special case $Q=0$, the lemma with $K(t)=I$ suffices. Notice that in this case the result holds with $\alpha=2\delta$. In addition, by similar techniques, it can be shown separately for the forward and backward operators that $\norm{e^{(\pm A-\kappa C^*C)T}}\le 1-\delta\kappa+\mathcal{O}(\kappa^2)$.
\begin{proof}
The semigroup perturbation formula (see \cite[Section~3.1]{Pazy}) is easily checked also for time-dependent perturbations, and it gives
\begin{align*}
U^-(T)U^+(T) =&\left(e^{-AT}-\kappa \int_0^T e^{-A(T-s)}K(T-s)C^*CU^-(s)ds \right) \times \\ & \times
\left(e^{AT}-\kappa \int_0^Te^{A(T-s)}K(s)C^*CU^+(s)ds \right).
\end{align*}
From this equation it is possible to collect the zeroth and first order terms at $\kappa=0$ to get
\begin{align} \nonumber
& U^-(T)U^+(T) \\ \nonumber &=I-2\kappa \int_0^T e^{-As}K(s)C^*Ce^{As}ds+\mathcal{O}(\kappa^2) \\ \label{eq:approx} &= I-2\kappa \int_0^T k(s) e^{-As}C^*Ce^{As}ds \\ \nonumber & \ \ \ -2\kappa  \int_0^T  e^{-As}\big(K(s)-k(s)I\big)C^*Ce^{As}ds+\mathcal{O}(\kappa^2). \hspace{-5mm}
\end{align}
Applying the perturbation formula again for $e^{-As}=e^{(A^*+Q)s}$ gives
\begin{equation} \label{eq:pert}
e^{(A^*+Q)s}=e^{A^*s}+\int_0^s e^{A^*(s-r)}Qe^{(A^*+Q)r}dr.
\end{equation}
Recalling $\norm{e^{(A^*+Q)r}} \le e^{\norm{Q/2}r}$, we get a bound for the second term in~\eqref{eq:pert}:
\begin{equation} \label{eq:gronwall}
\norm{\int_0^s e^{A^*(s-r)}Qe^{(A^*+Q)r}dr} \le \norm{Q} \int_0^s \bnorm{e^{(A^*+Q)r}}dr \le 2(e^{\norm{Q/2}s}-1).
\end{equation}
The third term in \eqref{eq:approx} can be bounded by $2\kappa \norm{C}^2 \int_0^T e^{\norm{Q/2}s}\norm{K(s)-k(s)I}ds$.
Using this bound and equations \eqref{eq:approx}--\eqref{eq:gronwall}, we have
\begin{align*}
&\norm{U^-(T)U^+(T)}_{\mathcal{L}(\Xs)} \\ & \le \norm{ I-2k_0\kappa \int_0^T e^{A^*s}C^*Ce^{As}ds}+4\kappa k_1\norm{C}^2\left( \frac{e^{\norm{Q/2}T}-1}{\norm{Q/2}}-T\right) \\ & \ \ \ +2\kappa \norm{C}^2 \int_0^T e^{\norm{Q/2}s}\norm{K(s)-k(s)I}ds +\mathcal{O}(\kappa^2)
\end{align*}
where the replacement of $k(s)$ by its lower bound $k_0$ is justified by positivity of the term $e^{A^*s}C^*Ce^{As}$. The operator in the first term on the right hand side is self-adjoint and positive-definite (for $\kappa$ small enough), so its norm can be bounded using the observability assumption by
\[
\norm{ I-2k_0\kappa \int_0^T e^{A^*s}C^*Ce^{As}ds} \le 1-2k_0\delta\kappa
\]
completing the proof.
\end{proof}

\subsection{Systems with skew-adjoint generator} \label{sec:skew}

We now move on to prove the first main result of the paper, namely the convergence result in the case of a system with skew-adjoint generator.
\begin{thm} \label{thm:skew}
Assume $\dom(A^*)=\dom(A)$ and $A+A^*=0$. Assume also $C \in \mathcal{L}(\Xs,\Ys)$ and $\int_0^T \norm{Ce^{At}x}^2 dt \ge \delta \norm{x}^2$ for all $x \in \Xs$ and some $\delta>0$. Choose the observer gains $\kappa_j >0$ so that
$
\sum_{j=1}^{\infty}\kappa_j = \infty$
and
$\sum_{j=1}^{\infty}\kappa_j^2 < \infty$.
Then as $j \to \infty$, the initial state estimate $z_j^+(0)$ converges strongly to the minimizer of the cost function $J$ defined in \eqref{eq:cost_init}.
\end{thm}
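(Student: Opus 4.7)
The plan is to identify the unique minimizer $x^*$ of $J$ and track the BFN iterates as perturbations of the optimal trajectory $\bar z(t) := z[x^*](t)$. Exact observability makes the observability Gramian coercive, so $J$ is strongly convex, $x^*$ exists uniquely, and the Euler--Lagrange condition reads
\[
\int_0^T e^{A^* s} C^* r(s)\, ds = 0, \qquad r(s) := y(s) - C \bar z(s).
\]
Writing $x_j := z_j^+(0)$ and introducing the shifted errors $\tilde z_j^+(t) := z_j^+(t) - \bar z(t)$ and $\tilde z_j^-(t) := z_j^-(t) - \bar z(T-t)$, direct subtraction gives
\[
\dot{\tilde z}_j^+ = (A - \kappa_j C^*C)\tilde z_j^+ + \kappa_j C^* r, \qquad \dot{\tilde z}_j^- = (-A - \kappa_j C^*C)\tilde z_j^- + \kappa_j C^* r(T-\cdot),
\]
with $\tilde z_j^+(0) = x_j - x^*$, $\tilde z_j^-(0) = \tilde z_j^+(T)$, and $\tilde z_j^-(T) = x_{j+1} - x^*$. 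Duhamel's formula and composition of the two phases then produce a recursion
\[
x_{j+1} - x^* = L_j (x_j - x^*) + \kappa_j v_j,
\]
where $L_j = e^{(-A - \kappa_j C^*C)T} e^{(A - \kappa_j C^*C)T}$ (the operator $U^-(T)U^+(T)$ of Lemma 3.1) and $v_j$ is an explicit sum of two integrals of $r$ against perturbed semigroups.

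The core estimates say that both the contraction factor and the forcing shrink in $\kappa_j$. For the contraction, Lemma 3.1 applied with $Q=0$, $K \equiv I$, $k \equiv 1$ yields $\|L_j\|_{\mathcal{L}(\Xs)} \le 1 - 2\delta\kappa_j + \mathcal{O}(\kappa_j^2)$. For the forcing, I would use the semigroup perturbation formula exactly as in the proof of Lemma 3.1 to obtain $v_j = v_0 + \mathcal{O}(\kappa_j)$, where $v_0$ results from setting $\kappa_j = 0$ in the integrands. Changing variable $u = T-s$ in the backward integral and using $e^{-AT} e^{A(T-s)} = e^{-As} = e^{A^*s}$ collapses $v_0$ to
\[
v_0 = 2 \int_0^T e^{A^* s} C^* r(s)\, ds,
\]
which vanishes by the optimality of $x^*$. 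Hence $\|\kappa_j v_j\| = \mathcal{O}(\kappa_j^2)$.

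Combining the two bounds gives a recursion $\|x_{j+1} - x^*\| \le (1 - 2\delta\kappa_j)\|x_j - x^*\| + C_0 \kappa_j^2$, valid for $j$ large enough that the higher-order terms are absorbed (note $\kappa_j \to 0$ follows from the summability assumption). A standard deterministic analogue of the Robbins--Monro lemma --- iterating and using $\sum_j \kappa_j = \infty$ to force the product of contraction factors to zero, together with $\sum_j \kappa_j^2 < \infty$ to keep the accumulated forcing summable and tail-small --- concludes $\|x_j - x^*\| \to 0$, which is the claimed strong convergence.

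The step I expect to be most delicate is the identification $v_0 = 0$. This is the moment at which the BFN iteration becomes tied to the variational problem: the two halves of one forward--backward cycle, with colocated feedback, assemble at zeroth order into exactly $-2\nabla J(x^*)$. The cancellation relies crucially on $e^{A^* s} = e^{-As}$, i.e.\ on skew-adjointness; without it (in the ESAD case) the same computation produces a biased offset, which presumably explains why the correction factor $e^{At}e^{A^*t}$ must be inserted into the feedback in the sequel.
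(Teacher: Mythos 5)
Your proposal is correct and follows essentially the same route as the paper: the same characterization of the minimizer via $\int_0^T e^{A^*s}C^*r(s)\,ds=0$, the same error recursion contracted by $U^-(T)U^+(T)$ via Lemma~\ref{lma:stable} with $K=I$, the same perturbation-formula argument showing the zeroth-order forcing vanishes by optimality (leaving an $\mathcal{O}(\kappa_j^2)$ remainder), and the same Robbins--Monro-type summation to conclude. The only cosmetic difference is that you assemble the two phases' forcing terms into a single $v_0=2\int_0^T e^{A^*s}C^*r(s)\,ds$, whereas the paper shows each phase's integral vanishes separately at $t=T$.
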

\begin{proof}
Due to the assumed exact observability, the cost function $J$ is strictly convex and thus a unique minimizer $x^o$ exists. The minimizer is characterized by $\nabla J(x)|_{x=x^o}=0$ (the Fr\'echet derivative of $J$ with respect to $x$), which is equivalent to
\begin{equation} \label{eq:nablaJ}
\int_0^T \ip{y(s)-Cz[x^o](s),Ce^{As}h}ds=0, \qquad \forall \, h \in \Xs
\end{equation}
where $z[x^o]$ is defined in \eqref{eq:zinit}.
Denote $y-Cz[x^o]=:\chi$ and notice that by~\eqref{eq:nablaJ}, 
\begin{equation} \label{eq:perpnoise}
\int_0^T e^{A^*s}C^*\chi(s) ds=0. 
\end{equation}
Now we can summarize
\[
\begin{cases}
\dot z[x^o](t)=Az[x^o](t)+f(t),  
\\ z[x^o](0)=x^o, \\
y(t)=Cz[x^o](t)+\chi(t).
\end{cases}
\]
Denote then $\varepsilon_j^+(t):=z[x^o](t)-z_j^+(t)$ and $\varepsilon_j^-(t):=z[x^o](T-t)-z_j^-(t)$ .
By \eqref{eq:fwd} and \eqref{eq:bwd}, they satisfy
\[
\begin{cases}
\dot\varepsilon_j^+(t)=(A-\kappa_jC^*C)\varepsilon_j^+(t)-\kappa_jC^* \chi(t), \qquad
\varepsilon_j^+(0)=\varepsilon_{j-1}^-(T), \\
\dot\varepsilon_j^-(t)=(-A-\kappa_jC^*C)\varepsilon_j^-(t)-\kappa_jC^* \chi(T-t), \qquad
\varepsilon_j^-(0)=\varepsilon_j^+(T).
\end{cases}
\]
The solution for the first equation is given by
\begin{equation} \label{eq:vareps}
\varepsilon_j^+(t)=e^{(A-\kappa_jC^*C)t}\varepsilon_{j-1}^-(T)-\kappa_j \int_0^t e^{(A-\kappa_j C^*C)(t-s)}C^*\chi(s)ds.
\end{equation}
The second term itself is a solution to
\[
\dot\varepsilon(t)=(A-\kappa_jC^*C)\varepsilon(t)-\kappa_jC^* \chi(t), \qquad
\varepsilon(0)=0,
\]
and it can be decomposed into
\begin{equation} \label{eq:decomposition}
\varepsilon(t)=-\kappa_j\int_0^t e^{A(t-s)}C^* \chi(s)ds-\kappa_j\int_0^te^{A(t-s)}C^*C\varepsilon(s)ds.
\end{equation}
The first term is zero at $t=T$ by \eqref{eq:perpnoise} and $-A=A^*$. For the second term, it can be seen directly from \eqref{eq:vareps}, that 
$\norm{\varepsilon(t)} \le \kappa_j\norm{C}\sqrt{t} \norm{\chi}_{L^2(0,T)}$.
Then from \eqref{eq:decomposition}, we get
\begin{equation} \label{eq:quad}
\norm{\varepsilon(T)} \le \frac23 \kappa_j^2 \norm{C}^3 T^{3/2} \norm{\chi}_{L^2(0,T)}.
\end{equation}
The exactly same steps can be taken with $\varepsilon_j^-$. Then, by Lemma~\ref{lma:stable} with $K(s)=I$,
\[
\bnorm{\varepsilon_j^-(T)} \le \big( 1-2\delta\kappa_j \big) \bnorm{ \varepsilon_{j-1}^-(T)}+\mathcal{O}(1) \kappa_j^2 
\]
where the $\mathcal{O}(1)$-term refers to the asymptotic behavior as $\kappa_j \to 0$ and it contains $\frac43\norm{C}^3 T^{3/2} \norm{\chi}_{L^2(0,T)}$ from \eqref{eq:quad} and the contribution of the  $\mathcal{O}(\kappa_j^2)$-term from Lemma~\ref{lma:stable}. Finally,
\begin{align*}
\bnorm{\varepsilon_j^-(T)} \le & \prod_{i=1}^j \big( 1- 2\delta\kappa_i \big) \norm{\varepsilon_1^+(0)}  +\mathcal{O}(1)\sum_{i=1}^j \kappa_i^2 \prod_{k=i+1}^j \big( 1- 2\delta\kappa_k \big)
\end{align*}
from which the convergence can be deduced using the assumptions on $\kappa_j$'s and
\[
\prod_{k=i+1}^j \big( 1- 2\delta\kappa_k \big)=\exp \left( \sum_{k=i+1}^j \ln \big( 1- 2\delta\kappa_k \big)   \right) \le \exp \left( -2\delta \! \sum_{k=i+1}^j \kappa_k \right)
\]
which converges to zero for any $i$ as $j \to \infty$.
\end{proof}

\subsection{Systems with ESAD generator} \label{sec:ESAD}

In the case the generator satisfies $A+A^*=-Q$ for $Q \ge 0$ and $Q \ne 0$ is small enough, we get the following result.
\begin{thm} \label{thm:ESAD}
Assume $\dom(A)=\dom(A^*)$ and $A+A^*=-Q$ for $Q \ge 0$ where $Q$ is a bounded operator small enough to satisfy
\[
e^{-\norm{Q/2}T}\delta-3\norm{C}^2 \left(\frac{e^{\norm{Q/2}T}-1}{\norm{Q/2}}-T \right) > 0,
\]
 and $\int_0^T \norm{Ce^{At}x}^2dt \ge \delta \norm{x}^2$. Replace the feedback operator  $\kappa_j C^*$ in the forward observer \eqref{eq:fwd} by $\kappa_j P(t)C^*$  where $P(t)=e^{At}e^{A^*t}$, and by $\kappa_j P(T-t)C^*$ in the backward observer \eqref{eq:bwd}. Assume again $\sum_{j=1}^{\infty}\kappa_j = \infty$ and $\sum_{j=1}^{\infty}\kappa_j^2 < \infty$.  Then the initial state estimate given by the back and forth nudging method  converges strongly to the minimizer of the cost function $J$.
\end{thm}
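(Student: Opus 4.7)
The plan is to closely mirror the proof of Theorem~\ref{thm:skew}, with the time-dependent preconditioner $P(t)=e^{At}e^{A^*t}$ chosen precisely so that the first-order contribution of the noise $\chi$ to the error cancels against the minimizer's optimality condition. First, strict convexity of $J$, guaranteed by exact observability, yields a unique minimizer $x^o$, characterized by the same orthogonality relation as in the skew-adjoint case,
\begin{equation*}
\int_0^T e^{A^*s}C^*\chi(s)\,ds = 0, \qquad \chi := y-Cz[x^o].
\end{equation*}
Setting $\varepsilon_j^+(t):=z[x^o](t)-z_j^+(t)$ and $\varepsilon_j^-(t):=z[x^o](T-t)-z_j^-(t)$, these errors satisfy the analogue of the recursion analyzed in Theorem~\ref{thm:skew}, with every occurrence of $\kappa_j C^*$ replaced by $\kappa_j P(t)C^*$ (respectively $\kappa_j P(T-t)C^*$) in both the feedback operator and the forcing term.

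The crux of the argument is to verify that the forcing contribution to $\varepsilon_j^+(T)$ is of order $\kappa_j^2$. Expanding around the unperturbed flow via the semigroup perturbation formula --- and exploiting that $-A$ also generates a strongly continuous semigroup in the ESAD case, so that $e^{A(\cdot)}$ forms a group --- the first-order term equals
\begin{equation*}
-\kappa_j\int_0^T e^{A(T-s)}P(s)C^*\chi(s)\,ds = -\kappa_j e^{AT}\int_0^T e^{A^*s}C^*\chi(s)\,ds = 0
\end{equation*}
by the minimizer condition, where I have used the group identity $e^{A(T-s)}e^{As}=e^{AT}$. This is precisely why the specific choice $P(s)=e^{As}e^{A^*s}$ is needed: it untwists the forward propagator into $e^{A^*s}$, which is what the first-order optimality condition of $J$ tests against. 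A crude remainder bound of the form $\kappa_j^2 C(T,\norm{Q},\norm{C})\norm{\chi}_{L^2(0,T)}$ follows by iterating the perturbation formula once more and using $\norm{e^{\pm As}}, \norm{e^{\pm A^*s}}\le e^{\norm{Q/2}s}$, exactly in the manner of \eqref{eq:vareps}--\eqref{eq:quad}.

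For the homogeneous part I would apply Lemma~\ref{lma:stable} with $K(s)=P(s)$. Differentiating in $s$ yields the identity $P(s)-I = -\int_0^s e^{Ar}Qe^{A^*r}\,dr$, from which $\norm{P(s)-I}\le \norm{Q}s$, while the operator inequality $P(s)=e^{As}e^{A^*s}\ge e^{-\norm{Q}s}I$ supplies a strictly positive lower bound for the scalar surrogate $k(s)$. Substituting these bounds into the definition of $\alpha$ and matching constants against the smallness assumption $e^{-\norm{Q/2}T}\delta-3\norm{C}^2\big((e^{\norm{Q/2}T}-1)/\norm{Q/2}-T\big)>0$ delivers $\alpha>0$; as a sanity check one recovers $\alpha\to 2\delta$ as $\norm{Q}\to 0$, which is consistent with Theorem~\ref{thm:skew}. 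Consequently $\norm{U^-(T)U^+(T)}\le 1-\alpha\kappa_j+\mathcal{O}(\kappa_j^2)$.

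Combining the two estimates produces the recursion $\norm{\varepsilon_j^-(T)}\le (1-\alpha\kappa_j)\norm{\varepsilon_{j-1}^-(T)}+\mathcal{O}(\kappa_j^2)$, from which strong convergence follows by the identical telescoping/logarithm argument as in Theorem~\ref{thm:skew}, using $\sum\kappa_j=\infty$ and $\sum\kappa_j^2<\infty$. The main obstacle I anticipate is the third step: choosing $k(s)$ and the constants $k_0, k_1$ sharply enough that the explicit scalar inequality imposed on $Q$ really does imply $\alpha>0$, properly accounting for the $e^{\norm{Q/2}s}$ factors that arise both from the non-contractivity of $e^{-As}$ and from the bound on $\norm{P(s)-k(s)I}$. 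The first-order cancellation in step two is, by contrast, immediate once the group identity $e^{A(T-s)}P(s)=e^{AT}e^{A^*s}$ is spotted.
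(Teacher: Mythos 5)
Your proposal is correct and follows essentially the same route as the paper: the same optimality condition $\int_0^T e^{A^*s}C^*\chi(s)\,ds=0$, the same key cancellation $e^{A(T-s)}P(s)C^*\chi(s)=e^{AT}e^{A^*s}C^*\chi(s)$ in the modified decomposition, and the same appeal to Lemma~\ref{lma:stable} with $K(s)=P(s)$. The one detail you flag as open --- choosing the scalar surrogate so that the stated hypothesis on $Q$ yields $\alpha>0$ --- is settled in the paper by taking $k(s)=e^{-\norm{Q/2}s}$ (so $k_0=e^{-\norm{Q/2}T}$, $k_1=1$) and bounding $\norm{P(s)-k(s)I}\le 1-e^{-\norm{Q/2}s}$ via $e^{-\norm{Q}s}I\le P(s)\le I$, which reproduces exactly the constant $3$ in the assumption.
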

\begin{proof}
Let us first show that the assumption on $Q$ justifies the application of Lemma~\ref{lma:stable} with $k(t)=e^{-\norm{Q/2}t}$ so that $k_0=e^{-\norm{Q/2}T}$ and $k_1=1$. Application of the semigroup perturbation formula to $A^*=-A-Q$ gives
\[
e^{At}e^{A^*t}=I-\int_0^t e^{As}Qe^{A^*s}ds
\]
from which it is possible to deduce  $e^{-\norm{Q}t}I \le e^{At}e^{A^*t} \le I$. Thus \linebreak $\norm{e^{At}e^{A^*t} - e^{-\norm{Q/2}t}I} \le 1-e^{-\norm{Q/2}t}$.  Using this, we get
\[
\int_0^T e^{\norm{Q/2}s}\bnorm{e^{As}e^{A^*s} - e^{-\norm{Q/2}t}I}ds  \le \frac{e^{\norm{Q/2}T}-1}{\norm{Q/2}}-T
\]
assuring that $\alpha$ in Lemma~\ref{lma:stable} is strictly positive.

The key steps in the proof are exactly the same as in the proof of Theorem~\ref{thm:skew}, but \eqref{eq:decomposition} is modified a little to
\begin{align*}  
\varepsilon(t)=&-\kappa_j\int_0^t e^{A(t-s)}e^{As}e^{A^*s}C^* \chi(s)ds -\kappa_j\int_0^te^{A(t-s)}e^{As}e^{A^*s}C^*C\varepsilon(s)ds \\ =& -\kappa_j e^{At} \int_0^t e^{A^*s}C^* \chi(s)ds-\kappa_j e^{At} \int_0^te^{A^*s}C^*C\varepsilon(s)ds
\end{align*}
from which we proceed as before.
\end{proof}
\noindent Notice that without the correction $P(t)$ in the feedback term, the initial state estimate from the BFN method converges to $x^{\textup{bias}}$ satisfying 
\[
\int_0^T e^{-As}C^* \big( y(s)-Cz[x^{\textup{bias}}](s)\big) ds =0
\]
instead of \eqref{eq:perpnoise} which characterizes the optimum $x^o$. Here $z[x^{\textup{bias}}]$ is defined in \eqref{eq:zinit}.

 The benefit of using the BFN method lies in the computational lightness of the utilized Luenberger-type observer. Therefore it is usually not desirable to numerically compute the full operator $P(t)=e^{At}e^{A^*t}$ required in the previous theorem. Luckily, in some special cases this operator can be at least approximated analytically. The case with wave equation with constant dissipation term will be demonstrated in Section~\ref{sec:wave}. 

We remark that the assumed bound on $\norm{Q}$ in Theorem~\ref{thm:ESAD} can be quite restrictive. However, it should be viewed as a sufficient condition for the theorem, but the algorithm may convergence even if this condition is not satisfied.

\subsection{The classical BFN approach}

In the classical back and forth nudging method with colocated feedback the gain is kept constant, that is, $\kappa_j=\kappa$. We  show that in such case the BFN estimate converges to the minimizer of the cost function $J$  but with $z[x]$ defined by
\begin{equation} \label{eq:zinit_feedback}
\begin{cases}
\dot z[x]=Az[x]+f+\kappa C^*(y-Cz[x]), \\
z[x](0)=x.
\end{cases}
\end{equation}
We remark that for example in the presence of modeling errors, it may happen that the measurement $y$ cannot be even closely reproduced by the open loop system \eqref{eq:zinit} with any initial state $x$. In such case the minimizer of $J$ given by \eqref{eq:cost_init} with \eqref{eq:zinit} cannot be expected to be very good. Also, if $Q$ does not satisfy the assumptions of Theorem~\ref{thm:ESAD}, then it may not be possible to take $\kappa \to 0$.

We shall show this only in the skew-adjoint case, but a similar variant for ESAD systems is possible. In addition, we again make the exact observability assumption, but this theorem can be straightforwardly generalized to the non-observable case as is done in \cite{Haine}. In that case the convergence is not exponential and of course the minimizer is not necessarily unique.
\begin{thm}
Assume $\dom(A^*)=\dom(A)$ and $A+A^*=0$. Assume also $\int_0^T\norm{Ce^{At}x}^2dt \ge \delta \norm{x}^2$ for all $x \in \Xs$ and some $\delta >0$.
Then the initial state estimate from the BFN method with constant observer gain $\kappa$ converges exponentially to the minimizer of $J$ given in \eqref{eq:cost_init} with $z[x]$ defined in \eqref{eq:zinit_feedback}.
\end{thm}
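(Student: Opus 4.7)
The plan is to mimic the proof of Theorem~\ref{thm:skew}, exploiting the fact that the reference trajectory $z[x^o]$ satisfies the \emph{same} closed-loop equation \eqref{eq:zinit_feedback} as the forward observer \eqref{eq:fwd} with $\kappa_j=\kappa$. This eliminates the $\chi$-driven term in the forward error, and the $\chi$-driven term in the backward error will cancel exactly by the first-order optimality condition for the new $J$. What is left is a clean geometric iteration $\varepsilon_{j}^-(T) = M\varepsilon_{j-1}^-(T)$ with a strictly contractive $M$, giving exponential decay rather than merely strong convergence.

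First I would derive the optimality condition. Since $y$ enters \eqref{eq:zinit_feedback} as a forcing independent of $x$, the map $x \mapsto z[x]$ is affine with linear part $h\mapsto e^{(A-\kappa C^*C)t}h$, so $\nabla J(x^o)=0$ reads
\[
\int_0^T e^{(A^*-\kappa C^*C)s}C^*\chi(s)\,ds=0, \qquad \chi:=y-Cz[x^o].
\]
Using $A^*=-A$ this becomes $\int_0^T e^{(-A-\kappa C^*C)s}C^*\chi(s)\,ds=0$, which matches precisely the semigroup generated by the backward observer. Existence and uniqueness of $x^o$ is inherited from the observability of $(A-\kappa C^*C,C)$, which for small $\kappa$ follows from exact observability of $(A,C)$ by a standard perturbation argument.

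Defining $\varepsilon_j^\pm$ as in the proof of Theorem~\ref{thm:skew}, the forward error satisfies the homogeneous equation $\dot\varepsilon_j^+=(A-\kappa C^*C)\varepsilon_j^+$ because $z[x^o]$ and $z_j^+$ solve identical ODEs, hence $\varepsilon_j^+(T)=e^{(A-\kappa C^*C)T}\varepsilon_{j-1}^-(T)$. For the backward observer, setting $\tilde z(t):=z[x^o](T-t)$ and using $\dot{\tilde z}=-A\tilde z-f(T-t)-\kappa C^*\chi(T-t)$, a short calculation gives
\[
\dot\varepsilon_j^- = (-A-\kappa C^*C)\varepsilon_j^- - 2\kappa C^*\chi(T-t),
\]
so by Duhamel's formula and the substitution $u=T-s$,
\[
\varepsilon_j^-(T) = e^{(-A-\kappa C^*C)T}\varepsilon_j^+(T) - 2\kappa\int_0^T e^{(-A-\kappa C^*C)u}C^*\chi(u)\,du.
\]
The integral vanishes by the optimality condition, yielding $\varepsilon_j^-(T) = e^{(-A-\kappa C^*C)T}e^{(A-\kappa C^*C)T}\varepsilon_{j-1}^-(T)$. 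The remark after Lemma~\ref{lma:stable}, applied separately to $\pm A$, gives $\norm{e^{(\pm A - \kappa C^*C)T}}\le 1 - \delta\kappa + \mathcal{O}(\kappa^2)$; for $\kappa$ small enough the product has norm $\rho<1$, which yields $\norm{\varepsilon_j^-(T)}\le\rho^j\norm{\varepsilon_0^-(T)}$, i.e.\ exponential convergence.

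The key subtlety is the exact cancellation in the Duhamel integral: the backward observer's semigroup must coincide with the adjoint of the forward observer's semigroup, which requires $A^*=-A$. For an ESAD generator the same strategy would work only after inserting an $e^{At}e^{A^*t}$-style correction into the feedback (as in Theorem~\ref{thm:ESAD}) so that the two semigroups match again, which is why the theorem is stated here only in the skew-adjoint case.
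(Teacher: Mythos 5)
Your argument follows the paper's proof essentially step for step: the same first-order optimality condition $\int_0^T e^{(A^*-\kappa C^*C)s}C^*\chi(s)\,ds=0$, the same error equations (homogeneous forward, $-2\kappa C^*\chi(T-\cdot)$-driven backward), the same observation that the Duhamel integral is exactly the optimality condition because $A^*=-A$ makes the backward closed-loop semigroup the adjoint of the forward one, and the same resulting geometric iteration. Your added remarks on well-posedness of the minimizer and on why the cancellation fails for ESAD generators are correct and consistent with the paper.

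The one point where you deviate is the final contraction estimate, and there you prove slightly less than the statement asserts. You invoke the remark after Lemma~\ref{lma:stable}, $\norm{e^{(\pm A-\kappa C^*C)T}}\le 1-\delta\kappa+\mathcal{O}(\kappa^2)$, which is an asymptotic bound as $\kappa\to 0$ and therefore only yields $\rho<1$ for $\kappa$ sufficiently small. The theorem, however, is about the \emph{classical} BFN scheme with an arbitrary fixed gain $\kappa>0$ (indeed the surrounding discussion points out that one may not be able to take $\kappa$ small). The paper closes this step by citing \cite[Theorem~2.3~(c)]{Liu} or \cite[Theorem~1.1]{Curtain_Weiss}: for a skew-adjoint generator with colocated feedback, exact observability gives $\norm{e^{(\pm A-\kappa C^*C)T}}\le\gamma<1$ for every $\kappa>0$. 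Equivalently, you could argue directly from the energy identity
\begin{equation*}
\bnorm{e^{(A-\kappa C^*C)T}x}^2=\norm{x}^2-2\kappa\int_0^T\bnorm{Ce^{(A-\kappa C^*C)s}x}^2ds
\end{equation*}
together with exact observability of the closed-loop system (which follows from that of $(A,C)$ for all $\kappa$, not only small $\kappa$). With that replacement your proof matches the paper's and covers the full statement.
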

\begin{proof}
Denote again the minimizer by $x^o$ and the corresponding solution of \eqref{eq:zinit_feedback} by $z[x^o]$. Denote again $y-Cz[x^o]=:\chi$. The minimizer is characterized by
\begin{equation} \label{eq:minchar}
\int_0^T e^{(A^*-\kappa C^*C)s}C^*\chi(s)ds=0.
\end{equation}
Denote $\varepsilon_j^+(t)=z[x^o](t)-z_j^+(t)$ and $\varepsilon_j^-(t)=z[x^o](T-t)-z_j^-(t)$. They satisfy
\[ \arraycolsep=1.4pt\def\arraystretch{1.15}
\left\{ \begin{array}{lll}
\dot \varepsilon_j^+(t) &=&(A-\kappa C^*C)\varepsilon_j^+(t), \\
\dot \varepsilon_j^-(t) &=&(-A+\kappa C^*C)z[x^o](T-t)  -(-A-\kappa C^*C)z_j^-(t)-2\kappa C^*y(T-t) \\
&=&(-A-\kappa C^*C)\varepsilon_j^-(t)-2\kappa C^* \chi(T-t).
\end{array} \right.
\]
Now $\varepsilon_j^+(T)=e^{(A-\kappa C^*C)T}\varepsilon_j^+(0)$ and
\[
\varepsilon_j^-(T)=e^{(-A-\kappa C^*C)T}\varepsilon_j^-(0)+\int_0^T e^{(-A-\kappa C^*C)(T-s)}C^*\chi(T-s)ds
\]
where the second term is zero by \eqref{eq:minchar} since $-A=A^*$. By \cite[Theorem~2.3 (c)]{Liu} or \cite[Theorem~1.1]{Curtain_Weiss}, it holds that $\norm{e^{(\pm A-\kappa C^*C)T}} \le \gamma$ with some $\gamma < 1$ and hence $\bnorm{\varepsilon_j^-(T)} \le \gamma^{2j} \bnorm{\varepsilon_1^+(0)}$.
\end{proof}

\section{Wave equation with dissipation} \label{sec:wave}

Consider the wave equation with constant dissipation
\begin{equation} \label{eq:wave}
\left\{ \!\!
\begin{array}{ll}
u_{tt}(x,t)=\Delta u(x,t) -\epsilon u_t(x,t), &  x \in \Omega, \ t \in \mathbb{R}^+, \\
u(x,t)=0,  & x \in \partial\Omega, \\
u(x,0)=u_0(x), \ u_t(x,0)=v_0(x) &
\end{array}
\right.
\end{equation}
where $\Omega \subset \mathbb{R}^n$ is a sufficiently smooth domain and $\epsilon \ge 0$. As usual, \eqref{eq:wave} is written as a first order system using $v=u_t$,
\begin{equation} \label{eq:wavesys}
\frac{d}{dt}\begin{bmatrix} u \\ v \end{bmatrix}=\begin{bmatrix} 0 & I \\ \Delta & -\epsilon I \end{bmatrix} \begin{bmatrix} u \\ v \end{bmatrix},
\end{equation} 
which is denoted $\dot z = Az$. The state space is $\Xs=H_0^1(\Omega) \times L^2(\Omega)$ where the first component is equipped with the norm $\norm{u}_{H_0^1(\Omega)}^2:=\int_{\Omega} \norm{\nabla u}^2 dx$. In this space, it holds that $A+A^*= -\left[ \begin{smallmatrix} 0&0 \\ 0& 2\epsilon I \end{smallmatrix} \right]$.

Now let $\{-\lambda_j\}_{j=1}^{\infty} \subset \mathbb{R}^-$ be the sequence of eigenvalues of the Laplacian in $\Omega$ with Dirichlet boundary conditions in ascending order (by their absolute values) and denote by $\{e_j(x)\}_{j=1}^{\infty}$ the corresponding $L^2$-normalized eigenfunctions. Assume that the dissipation satisfies $\epsilon^2 < 4\lambda_1$. By the separation of variables principle, the system's eigenfrequencies are given by  $\omega_j:=\sqrt{\lambda_j-\frac{\epsilon^2}4}$, and the solution to the initial value problem \eqref{eq:wave} is given by \[
u(x,t)=e^{-\frac{\epsilon}2t}\sum_{j=1}^{\infty} \left[ \alpha_j \left( \cu(\omega_j t) +\frac{\epsilon}{2\omega_j} \su(\omega_j t) \right) + \frac{\beta_j}{\omega_j}\su(\omega_j t) \right]e_j(x)
\]
where the coefficients $\alpha_j$ and $\beta_j$ are the Fourier coefficients of $u_0$ and $v_0$, respectively.
From this solution we can construct the semigroup $e^{At}$ in the basis $\left\{ \left[ \begin{smallmatrix} e_j(x) \\ 0 \end{smallmatrix} \right] \right\}_{j=1}^{\infty} \bigcup \left\{ \left[ \begin{smallmatrix} 0 \\ e_j(x) \end{smallmatrix} \right] \right\}_{j=1}^{\infty}$  as
\[
e^{At}=e^{-\frac{\epsilon}2t}  \left[ \begin{array}{lr} \cu(\omega_j t)+\frac{\epsilon}{2\omega_j} \su(\omega_j t) & \frac1{\omega_j} \su(\omega_jt) \vspace{2mm} \\  -\frac{\lambda_j}{\omega_j} \su(\omega_jt) & \hspace{-10mm}\cu(\omega_jt)-\frac{\epsilon}{2\omega_j}\su(\omega_jt) \end{array} \right]
\]
where the elements are interpreted as infinite diagonal matrices, $j=1,2,...$.
Note that the inner product in $\Xs$ in this basis is given by 
\begin{equation} \label{eq:ip}
\ip{ \bm{ \alpha \\ \beta} , \bm{a \\ b}}_{\!\! \Xs}=\sum_{j=1}^{\infty} \big( \lambda_j \alpha_j a_j + \beta_j b_j \big),
\end{equation}
and so we have 
\begin{align*}
&e^{At}e^{A^*t} \\ &=e^{-\epsilon t} \bm{\cu^2(\omega_jt)+\frac{\epsilon}{\omega_j}\su(\omega_jt)\cu(\omega_jt)+\frac{\lambda_j+\epsilon^2/4}{\lambda_j-\epsilon^2/4}\su^2(\omega_jt) & -\frac{\epsilon}{\omega_j^2}\su^2(\omega_jt) \hspace{-4mm} \vspace{3mm} \\ \hspace{-59mm} -\frac{\lambda_j\epsilon}{\omega_j^2}\su^2(\omega_jt) & \hspace{-55mm} \cu^2(\omega_jt)-\frac{\epsilon}{\omega_j}\su(\omega_jt)\cu(\omega_jt)+\frac{\lambda_j+\epsilon^2/4}{\lambda_j-\epsilon^2/4}\su^2(\omega_jt)}  \\
&=e^{-\epsilon t} I + \epsilon e^{-\epsilon t} \bm{\frac{1}{\omega_j}\su(\omega_jt)\cu(\omega_jt)+\frac{\epsilon/2}{\lambda_j-\epsilon^2/4}\su^2(\omega_jt)& -\frac{1}{\omega_j^2}\su^2(\omega_jt) \hspace{-4mm} \vspace{3mm} \\ \hspace{-40mm} -\frac{\lambda_j}{\omega_j^2}\su^2(\omega_jt) & \hspace{-37mm} -\frac{1}{\omega_j}\su(\omega_jt)\cu(\omega_jt)+\frac{\epsilon/2}{\lambda_j-\epsilon^2/4}\su^2(\omega_jt)}.
\end{align*}
In the following error estimates, we need a bound for the $\mathcal{L}(\Xs)$-norm of $e^{At}e^{A^*t}-e^{-\epsilon t}I$. Because of the blockwise structure of the matrix operator above, and taking into account \eqref{eq:ip}, an estimate is obtained by finding a uniform (that is, holding for all $j=1,2,...$) bound for the $\mathbb{R}^{2\times2}$ matrix norms of the blocks
\[
\bm{\frac{1}{\omega_j}\su(\omega_jt)\cu(\omega_jt)+\frac{2\epsilon}{\omega_j^2}\su^2(\omega_jt)& -\frac{\sqrt{\lambda_j}}{\omega_j^2}\su^2(\omega_jt) \hspace{-4.5mm} \vspace{2mm} \\ \hspace{-29mm} -\frac{\sqrt{\lambda_j}}{\omega_j^2}\su^2(\omega_jt) & \hspace{-27mm} -\frac{1}{\omega_j}\su(\omega_jt)\cu(\omega_jt)+\frac{2\epsilon}{\omega_j^2}\su^2(\omega_jt)}
\] 
which, in turn, can be bounded from above by the Frobenius norm, yielding
\begin{equation} \label{eq:frob}
\norm{e^{At}e^{A^*t}-e^{-\epsilon t}I} \le \epsilon e^{-\epsilon t}\frac{2\sqrt{\lambda_1}}{\lambda_1-\epsilon^2/4}.
\end{equation}

The error stemming from using $K(t)=e^{-\epsilon t}$ instead of $K(t)=e^{A^*t}e^{At}$ in the observer gain can be bounded from above:
\begin{thm} \label{thm:wave1}
Assume $C \in \mathcal{L}(\Xs,\Ys)$ is such that the system is exactly observable at time $T$. Assume also that $\epsilon$ is small enough so that the assumption of Lemma~\ref{lma:stable} is satisfied, and that $\sum_{j=1}^{\infty}\kappa_j = \infty$ and $\sum_{j=1}^{\infty}\kappa_j^2 < \infty$.

Then as $j \to \infty$, the back and forth observer with feedback $\kappa_je^{-\epsilon t} C^*$ converges to an estimate $\tilde x$, for which it holds that
\[
\norm{x^o-\tilde x} \le \frac{\epsilon\norm{C}\sqrt{T}}{\delta}\frac{2\sqrt{\lambda_1}}{\lambda_1-\epsilon^2/4}\norm{\tilde\chi}_{L^2(0,T)}
\]
where $\tilde\chi=y-Cz[\tilde x]$ and $z[\tilde x]$ is defined in \eqref{eq:zinit}.
\end{thm}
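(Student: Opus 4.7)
The strategy mirrors the proof of Theorem~\ref{thm:ESAD}, with the exact correction $P(t)=e^{At}e^{A^*t}$ replaced by the scalar approximation $e^{-\epsilon t}I$; the approximation error estimated in \eqref{eq:frob} is what produces the quantitative discrepancy between the BFN limit $\tilde x$ and the open-loop minimizer $x^o$.

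First, I would invoke Lemma~\ref{lma:stable} with $K(t)=e^{-\epsilon t}I$ and $k(t)=e^{-\epsilon t}$. Because $K(t)-k(t)I\equiv 0$, the $\|K-kI\|$ integral in the definition of $\alpha$ drops out, and the stated smallness of $\epsilon$ (for which $\|Q/2\|=\epsilon$ in the wave setting) is precisely what ensures $\alpha>0$, yielding the contraction $\|U^-(T)U^+(T)\|\le 1-\alpha\kappa+\mathcal{O}(\kappa^2)$.

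Next I would identify the BFN limit as the unique $\tilde x\in\Xs$ satisfying
\begin{equation*}
\int_0^T e^{-As}e^{-\epsilon s}C^*\tilde\chi(s)\,ds=0,\qquad \tilde\chi:=y-Cz[\tilde x],
\end{equation*}
with $z[\tilde x]$ defined by \eqref{eq:zinit}. The linear part of this affine equation in $\tilde x$ is a norm-$\mathcal{O}(\epsilon)$ perturbation of the observability Gramian $\mathcal{G}:=\int_0^T e^{A^*s}C^*Ce^{As}\,ds\ge\delta I$, so it is boundedly invertible by Neumann series and $\tilde x$ is uniquely determined. Setting $\varepsilon_j^+(t):=z[\tilde x](t)-z_j^+(t)$ and $\varepsilon_j^-(t):=z[\tilde x](T-t)-z_j^-(t)$, I would then repeat the steps of Theorem~\ref{thm:ESAD}: variation of parameters with respect to $A$ splits the source contribution to $\varepsilon_j^+(T)$ into a first-order term $-\kappa_j e^{AT}\int_0^T e^{-As}e^{-\epsilon s}C^*\tilde\chi(s)\,ds$, which vanishes by the defining condition of $\tilde x$, plus an $\mathcal{O}(\kappa_j^2)$ self-interaction. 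An analogous analysis handles the backward leg, and combined with Lemma~\ref{lma:stable} and the assumptions on $\kappa_j$, the recursion $\|\varepsilon_j^-(T)\|\le(1-\alpha\kappa_j)\|\varepsilon_{j-1}^-(T)\|+\mathcal{O}(\kappa_j^2)$ gives $z_j^+(0)\to\tilde x$.

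Finally, I would compare $\tilde x$ with $x^o$. Since $x^o$ satisfies $\int_0^T e^{A^*s}C^*\chi^o(s)\,ds=0$ with $\chi^o=y-Cz[x^o]$ and $\chi^o(s)-\tilde\chi(s)=Ce^{As}(\tilde x-x^o)$, subtracting the two orthogonality conditions produces
\begin{equation*}
\mathcal{G}(\tilde x-x^o)=\int_0^T\big(e^{-As}e^{-\epsilon s}-e^{A^*s}\big)C^*\tilde\chi(s)\,ds.
\end{equation*}
Factoring the kernel as $e^{-As}\big(e^{-\epsilon s}I-e^{As}e^{A^*s}\big)$, combining $\|e^{-As}\|\le e^{\epsilon s}$ (the Gr\"onwall bound recorded just before Lemma~\ref{lma:stable}) with \eqref{eq:frob} gives the uniform operator-norm bound $\epsilon\cdot 2\sqrt{\lambda_1}/(\lambda_1-\epsilon^2/4)$ on the integrand, and Cauchy--Schwarz in time together with $\|\mathcal{G}^{-1}\|\le 1/\delta$ then yields the claimed estimate. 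I expect the main subtlety to be the clean juxtaposition of the two distinct orthogonality characterizations---$e^{A^*s}$ for $x^o$ versus $e^{-As}e^{-\epsilon s}$ for $\tilde x$---so that their difference exposes precisely the approximation kernel controlled by \eqref{eq:frob}; the existence and uniqueness of $\tilde x$ is a routine perturbation argument off $\mathcal{G}$.
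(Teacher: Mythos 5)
Your proposal is correct and follows essentially the same route as the paper: convergence to $\tilde x$ characterized by $\int_0^T e^{-As}e^{-\epsilon s}C^*\tilde\chi(s)\,ds=0$ via the machinery of Theorems~\ref{thm:skew}--\ref{thm:ESAD} and Lemma~\ref{lma:stable} with $K=kI$, then subtraction of the two orthogonality conditions to obtain exactly the paper's identity \eqref{eq:estimate}, and the final bound from $\norm{e^{-As}}\le e^{\epsilon s}$, \eqref{eq:frob}, Cauchy--Schwarz, and the Gramian lower bound $\delta$. The only addition beyond the paper is your explicit Neumann-series remark on the unique solvability of the equation defining $\tilde x$, which the paper leaves implicit.
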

\begin{proof}
By repeating the proof of Theorem~\ref{thm:skew} with the feedback term multiplied by $k(t)=e^{-\epsilon t}$, it can be seen that the BFN method converges to $\tilde x$, which is characterized by
\[
\int_0^T e^{-As}k(s)C^*\tilde\chi(s)ds=0.
\]
Inserting here $k(s)I=P(s)-\big(P(s)-k(s)I \big)$ and recalling $P(s)=e^{As}e^{A^*s}$, we get
\[
\int_0^Te^{A^*s}C^*\tilde\chi(s)ds-\int_0^Te^{-As}\big(P(s)-k(s)I\big)C^*\tilde\chi(s)ds=0.
\]
Now combining this, equation \eqref{eq:perpnoise} characterizing the optimum $x^o$, and  $\tilde\chi(t)-\chi(t)=Ce^{At}(x^o-\tilde x)$, yields
\begin{equation} \label{eq:estimate}
\int_0^T e^{A^*s}C^*Ce^{As}(x^o-\tilde x)ds=\int_0^Te^{-As}\big(P(s)-k(s)I\big)C^*\tilde\chi(s)ds.
\end{equation}

Finally, using the observability assumption, Cauchy--Schwartz inequality, the bound \eqref{eq:frob}, and the bound $\norm{e^{-As}}\le e^{\norm{Q/2}s}=e^{\epsilon s}$, we have the result.
\end{proof}
The bound \eqref{eq:frob} for $\norm{P(s)-k(s)I}$ is based on the operator's biggest component, corresponding to the system's lowest eigenmode. However, the lowest modes are typically better observable, and hence the inverse of the observability Gramian $\int_0^T e^{A^*s}C^*Ce^{As}ds$ in \eqref{eq:estimate} is likely to suppress these modes more efficiently than with coefficient $1/\delta$ which is based on the poorly identifiable modes. Therefore the error is likely to be considerably smaller than what is obtained in the previous theorem.

The error estimate of Theorem~\ref{thm:wave1} depends on $\norm{\tilde\chi}_{L^2(0,T)}$ which makes it effectively an \emph{a posteriori} estimate. We present another error estimate, which is based on a direct computation utilizing equation \eqref{eq:decomposition}. For this we need to set $\kappa_j=\kappa/j$ for some $\kappa>0$.
\begin{thm}
Assume $C \in \mathcal{L}(\Xs,\Ys)$ is such that the system is exactly observable at time $T$ and that $\epsilon$ is small enough so that the assumption of Lemma~\ref{lma:stable} is satisfied.
The back and forth observer with feedback $\frac{\kappa}je^{-\epsilon t} C^*$ converges to an estimate $\tilde x$, for which it holds that
\[
\norm{x^o-\tilde x} \le \frac{2\epsilon\norm{C}\sqrt{T}}{\alpha} \frac{2\sqrt{\lambda_1}}{\lambda_1-\epsilon^2/4} \norm{\chi}_{L^2(0,T)}
\]
where $\alpha$ is given in Lemma~\ref{lma:stable} and $\chi = y-Cz[x^o]$.
\end{thm}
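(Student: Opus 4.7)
The plan is to follow the proof of Theorem~\ref{thm:skew}, but to track the nonzero residual produced by using $k(s) I = e^{-\epsilon s} I$ instead of the exact correction $P(s) = e^{As} e^{A^*s}$. Writing one full forward--backward step and expanding to leading order in $\kappa_j$, the deviation $\varepsilon_j^-(T) - U_j^-(T) U_j^+(T) \varepsilon_{j-1}^-(T)$ equals the sum of the two driving contributions $U_j^-(T)\eta_j^+$ and $\eta_j^-$, where $\eta_j^{\pm}$ are the $\chi$-driven integrals in the analogues of~\eqref{eq:decomposition} for the two halves. Using $U_j^-(T) = e^{-AT} + \mathcal{O}(\kappa_j)$ and the change of variables $s \mapsto T-s$ in the backward integral, both collapse to $-\kappa_j \int_0^T e^{-As} k(s) C^* \chi(s)\,ds$, so the combined residual equals
\[
-2\kappa_j \int_0^T e^{-As} k(s) C^* \chi(s)\,ds + \mathcal{O}(\kappa_j^2).
\]

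Decomposing $k(s) I = P(s) - (P(s) - k(s) I)$ and using $e^{-As} P(s) = e^{A^*s}$, the $P$-part yields $-2\kappa_j \int_0^T e^{A^*s} C^* \chi(s)\,ds$, which vanishes by the optimality condition~\eqref{eq:perpnoise} for $x^o$. The remaining residual is bounded by Cauchy--Schwarz, using the Gronwall estimate $\norm{e^{-As}} \le e^{\epsilon s / 2}$ derived just before Lemma~\ref{lma:stable} together with the pointwise bound~\eqref{eq:frob}; the exponentials combine to $e^{-\epsilon s/2} \le 1$ and one obtains $2\kappa_j \rho \norm{\chi}_{L^2(0,T)}$ with $\rho := \epsilon \norm{C} \sqrt{T} \frac{2\sqrt{\lambda_1}}{\lambda_1 - \epsilon^2/4}$. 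Combined with Lemma~\ref{lma:stable}, this gives the recursion
\[
\bnorm{\varepsilon_j^-(T)} \le (1 - \alpha \kappa_j) \bnorm{\varepsilon_{j-1}^-(T)} + 2 \kappa_j \rho \norm{\chi}_{L^2(0,T)} + \mathcal{O}(\kappa_j^2).
\]

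The value $2\rho \norm{\chi}_{L^2(0,T)} / \alpha$ is the fixed point of the linearised (no $\mathcal{O}(\kappa_j^2)$) version of this recursion. Setting $b_j := \bnorm{\varepsilon_j^-(T)} - 2\rho \norm{\chi}_{L^2(0,T)}/\alpha$, the linear drift cancels and the inequality reduces to $b_j \le (1 - \alpha \kappa_j) b_{j-1} + \mathcal{O}(\kappa_j^2)$, which is precisely the recursion handled at the end of the proof of Theorem~\ref{thm:skew}. For $\kappa_j = \kappa/j$ one has $\sum \kappa_j = \infty$ and $\sum \kappa_j^2 < \infty$, so the telescoping estimate there yields $\limsup_j b_j \le 0$, and hence $\norm{x^o - \tilde x} = \lim_j \bnorm{\varepsilon_j^-(T)} \le 2\rho \norm{\chi}_{L^2(0,T)}/\alpha$, which is the stated inequality. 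The main subtlety is combining the forward and backward residuals into a single integral before bounding -- this preserves the clean factor $2$ rather than the looser $1 + e^{\epsilon T/2}$ that appears if the two halves are bounded separately -- and then identifying the nonzero fixed point so that the convergence argument of Theorem~\ref{thm:skew} applies verbatim to the shifted quantity $b_j$.
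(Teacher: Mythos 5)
Your proposal is correct, and its core coincides with the paper's proof: both isolate the $\chi$-driven first term of \eqref{eq:decomposition}, substitute $k(s)I=P(s)-\bigl(P(s)-k(s)I\bigr)$ so that the $P$-part collapses to $\int_0^T e^{A^*s}C^*\chi(s)\,ds=0$ via \eqref{eq:perpnoise}, and bound the remainder by Cauchy--Schwarz and \eqref{eq:frob} to get a per-iteration residual $2\rho\kappa_j\norm{\chi}_{L^2(0,T)}$ with the same constant $\rho$. You differ in two places, both to your credit. First, you make explicit how the forward half's residual is carried through the backward sweep (via $U_j^-(T)=e^{-AT}+\mathcal{O}(\kappa_j)$ and the substitution $s\mapsto T-s$), which is exactly what justifies the clean factor $2$ that the paper writes down without comment. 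Second, and more substantively, you resolve the resulting recursion by shifting by its fixed point $2\rho\norm{\chi}_{L^2(0,T)}/\alpha$ and reusing the telescoping argument of Theorem~\ref{thm:skew}, whereas the paper sums the accumulated residuals explicitly, bounding $\sum_{i\le j}\frac{\kappa}{i}\prod_{k=i+1}^{j}(1-\frac{\alpha\kappa}{k})$ by integral comparison with $x^{\alpha\kappa-1}$ --- a computation that genuinely requires $\kappa_j=\kappa/j$. Your route shows that this restriction (which the paper imposes before stating the theorem) is unnecessary: only $\sum\kappa_j=\infty$ and $\sum\kappa_j^2<\infty$ are used, which is a small but real strengthening. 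One slip worth fixing: for the damped wave system $Q=\left[\begin{smallmatrix}0&0\\0&2\epsilon I\end{smallmatrix}\right]$, so the Gr\"onwall bound preceding Lemma~\ref{lma:stable} gives $\norm{e^{-As}}\le e^{\norm{Q/2}s}=e^{\epsilon s}$, not $e^{\epsilon s/2}$; combined with the factor $\epsilon e^{-\epsilon s}$ from \eqref{eq:frob} the exponentials cancel exactly rather than leaving $e^{-\epsilon s/2}$, so your final bound is unaffected.
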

\begin{proof}
Consider the error decomposition \eqref{eq:decomposition} with feedback term $\kappa_j k(t) C^*=\frac{\kappa}j e^{-\epsilon t} C^*$ at time $t=T$. The contribution of the second term in the decomposition is vanishing when $j \to \infty$ as seen in the proof of Theorem~\ref{thm:skew}, so let us concentrate on the first term, which can be opened up by substituting $k(s)I=P(s)-\big(P(s)-k(s)I \big)$:
\begin{align*}
&\frac{\kappa}j \int_0^T e^{A(T-s)}k(s)C^*\chi(s)ds \\ & \! =\frac{\kappa}j e^{AT} \! \int_0^T \! e^{A^*s}C^*\chi(s)ds +\frac{\kappa}j \int_0^T \! e^{A(T-s)}(k(s)I-P(s))C^*\chi(s)ds
\end{align*}
where the first term is zero by \eqref{eq:perpnoise}. An upper bound for the norm of the second term is given by $\frac{\epsilon\kappa}j \frac{2\sqrt{\lambda_1}}{\lambda_1-\epsilon^2/4}\norm{C}\!\sqrt{T} \norm{\chi}_{L^2(0,T)}=: \! {M\kappa}/j$, as in the proof of Theorem~\ref{thm:wave1}. The total contribution of these terms after $j$ iterations is bounded by 
\begin{equation} \label{eq:bdsum}
2M\sum_{i=1}^j \frac{\kappa}i \prod_{k=i+1}^j \left(1-\frac{\alpha\kappa}k \right).
\end{equation}
The product can be bounded as in the proof of Theorem~\ref{thm:skew},
\[
\prod_{k=i+1}^j \! \left(1-\frac{\alpha\kappa}k \right) \le \exp \left( \! -\alpha\kappa \! \sum_{k=i+1}^j \frac1k \right) \le \exp \big( \alpha\kappa( \ln i-\ln j)\big)=\left( \frac{i}{j} \right)^{\alpha\kappa}
\]
where the second inequality is obtained by comparing the sum with the integral of $1/x$. Similarly, the sum in \eqref{eq:bdsum} can be bounded by the integral of $x^{\alpha\kappa-1}$
\[
 \sum_{i=1}^j \frac{\kappa}{j^{\alpha\kappa}} i^{\alpha\kappa-1} \le \frac{\kappa}{j^{\alpha\kappa}} \int_0^{j+1} x^{\alpha\kappa-1}dx=\frac{1}{\alpha}\left( \frac{j+1}j \right)^{\alpha\kappa}.
\]
As $j \to \infty$, all other terms in $\varepsilon_j^-(T)$ tend to zero and so the result follows.
\end{proof}

\section*{Acknowledgment}

The author thanks Philippe Moireau for discussions concerning this work.



\end{document}